\newtheorem{theorem}{Theorem}
\newtheorem{lemma}[theorem]{Lemma}
\newtheorem{corollary}[theorem]{Corollary}
\newtheorem{remark}[theorem]{Remark}
\title{Free Infinite Divisibility for Ultrasphericals}
\author{Octavio Arizmendi}
\address{Universit\"{a}t des Saarlandes, FR $6.1-$Mathematik, 66123 Saarbr\"{u}cken, Germany } 
\email{arizmendi@math.uni-sb.de}
\author{Serban T. Belinschi}
\address{Department of Mathematics \& Statistics,
Queen's University, and Institute of Mathematics ``Simion
Stoilow'' of the Romanian Academy;
Department of Mathematics and Statistics, 
Queen's University,
Jeffrey Hall, 
Kingston, ON K7L 3N6 CANADA} 
\email{sbelinsch@mast.queensu.ca}
\date{\today}
\begin{document}

\thanks{Work of S. Belinschi was supported by a Discovery
Grant from NSERC.\\ Octavio Arizmendi was supported by DFG-Deutsche Forschungsgemeinschaft Project SP419/8-1.}

\begin{abstract}We prove that the integral powers of the semicircular distribution are freely infinitely divisible. As a byproduct we get another proof of the free infnite divisibility of the classical Gaussian distribution.
\end{abstract}

\maketitle

\section{Introduction}

The semicircular distribution, with density $d\gamma_1(t)={(2\pi)}^{-1}\sqrt{4-t^2}{\bf 1}_{[-2,2]}\,dt$ is a very important probability measure, appearing as the distributional limit of the eigenvalues of large selfadjoint random matrices with independent entries. More important to us, this measure plays a central role in free probability theory since it arises from the free version of the central limit theorem, in particular, it is freely infinitely divisible (f. i. d. for short). 

In a different context, the semicircular distribution belongs to a family of measures called ultraspherical (or hyperspherical) distributions,
with density 
\begin{equation*}
d\gamma_n(t)=c_n(4-t^2)^{n-1/2}{\bf 1}_{[-2,2]}\,dt.
\end{equation*}
Here, the normalizing constant $c_n$ is simply the reciprocal of $\int_{-2}^2(4-t^2)^{n-1/2}\,dt$,
which, as a direct computation shows, equals $4^n\frac{1\cdot3\cdot5\cdots(2n-3)(2n-1)}{2\cdot4\cdot6\cdots(2n-2)\cdot(2n)}\pi$. 

The importance of this family comes, on one hand, from a geometrical point of view in the following way. Let $x=(x_1,x_2,...,x_{2n+2})$ be a random vector in $\mathbb{R}^{2n+2}$ with spherical symmetry and let $v=(v_1,\cdots,v_{2n+2})$ be \emph{any} fixed unit vector in $\mathbb{R}^{2n+2}$. If we denote by $\overline{x}:=\frac{x}{\rVert x\lVert}$ and by $\lambda$ the dot product $\lambda:=\overline{x}\cdot v$, then $2\lambda$ has a distribution which is ultraspherical $\gamma_n$ \cite{DiFr87,kingman}. Moreover, properly normalized, $\gamma_n$ converges to the Normal (or classical Gaussian) distribution  when $n$ tends to infinity, a result known as
Poincar\'{e}'s theorem; \cite{Poin}. For this reason, as explained by McKean \cite{Mc73},
one can think of the Wiener measure (all whose marginals are Gaussian) as the uniform distribution on an infinite
dimensional sphere. 

On the other hand, as observed in Arizmendi and Perez-Abreu \cite{APA}, the ultraspherical family contains all the Gaussian distributions with respect to the 5 fundamental independences in non-commutative probability, as classified by Muraki \cite{Mu02}.
Specifically, the left-boundary case $n=-1$ is the symmetric Bernoulli distribution appearing in the central limit theorem with respect to Boolean convolution (Speicher and Woroudi \cite{SpWo97}). Similarly, for $n=0$ we obtain the arcsine distribution, which
plays the same role in monotone and antimonotone convolutions as the Normal distribution does in classical
probability ( Muraki \cite{Mu97}).  As mentioned before, the case $n=1$, which is the semicircle distribution, and the right-boundary case $n=\infty$, which is the Normal distribution, correspond to the free and classical central limits theorems, respectively.

 From this observations, the free infinite divisibility of the ultraspherical distributions was
 considered in \cite{APA} as a mean to prove that the Normal distribution $\gamma_\infty$ is f. i. d. The authors proved using kurtosis arguments that $\gamma_n$ is not f. i. d. for $n<1$ and conjectured that it is f. i. d. for all $n\in[1,+\infty)$. The free infinite divisibility of the Normal distribution was proved in \cite{BBLS}. However the conjecture for the ultraspherical distributions remained open. 

In this paper we prove that $\gamma_n$ is f. i. d. for $n\in\mathbb N$. The method we
employ in our proof is similar to the method used in \cite{ABBL,BBLS}.
We will construct an inverse to the Cauchy transform $G_{\gamma_n}$
defined on the whole lower half-plane.
Then the Voiculescu transform $\phi_{\gamma_n}(1/z)=G_{\gamma_n}^{-1}(z)-(1/z)$
has an extension to the whole complex upper half-plane 
$\mathbb C^+$.

The following theorem from \cite{BVIUMJ}
allows us to conclude:

\begin{theorem}\label{bvid}
A Borel probability measure $\mu$ on the real line is $\boxplus$-infinitely divisible if and only if
its Voiculescu transform $\phi_\mu(z)$ extends to an analytic function $\phi_\mu\colon
\mathbb C^+\to\mathbb C^-$.
\end{theorem}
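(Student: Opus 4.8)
The plan is to route both implications through the free L\'evy--Khintchine representation, using as a bridge the class $\mathcal P$ of analytic functions $\psi\colon\mathbb C^+\to\overline{\mathbb C^-}$ with $\psi(iy)/(iy)\to0$ as $y\to+\infty$. (Here I work with the closed lower half-plane; by the open mapping theorem a nonconstant such $\psi$ automatically maps into the open $\mathbb C^-$, and the only constant values allowed are real, so this matches the stated condition up to the trivial shift case.) Recall that $\phi_\mu(z)=F_\mu^{-1}(z)-z$ with $F_\mu=1/G_\mu$, defined a priori only on a truncated Stolz angle near infinity, and that $\boxplus$ is characterized by $\phi_{\mu\boxplus\nu}=\phi_\mu+\phi_\nu$ on the common domain. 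Applying the Nevanlinna representation to the Pick function $-\psi$, and using the growth condition to kill the linear term, every $\psi\in\mathcal P$ takes the form $\psi(z)=\gamma+\int_{\mathbb R}\frac{1+tz}{z-t}\,d\sigma(t)$ for some $\gamma\in\mathbb R$ and finite positive Borel measure $\sigma$, and conversely. The decisive structural fact is that $\mathcal P$ is a convex cone: it is stable under multiplication by positive scalars.

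The heart of the argument is a realization lemma: \emph{every $\psi\in\mathcal P$ is the Voiculescu transform of a unique probability measure.} To prove this I would solve the functional equation that any reciprocal Cauchy transform must satisfy, namely $F(w)=w-\psi(F(w))$. Because $\psi$ is $\overline{\mathbb C^-}$-valued, the assignment $F\mapsto\bigl(w\mapsto w-\psi(F(w))\bigr)$ carries analytic self-maps of $\mathbb C^+$ to analytic self-maps of $\mathbb C^+$, and a fixed-point argument (contraction in the hyperbolic metric, via Denjoy--Wolff or Earle--Hamilton) yields a unique self-map $F$ of $\mathbb C^+$ solving it. The sublinear growth $\psi(iy)=o(y)$ forces $F(iy)/(iy)\to1$, so by the Nevanlinna characterization of reciprocal Cauchy transforms $F=F_\nu$ for a unique probability measure $\nu$, and unwinding the equation gives $\phi_\nu=\psi$. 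This is the step I expect to be the main obstacle: one must check that the fixed point genuinely exists, is defined on all of $\mathbb C^+$, and that the germ of $\phi_\nu$ near infinity really coincides with $\psi$ globally.

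Granting the lemma, the implication ``analytic extension $\Rightarrow$ infinitely divisible'' is immediate. Suppose $\phi_\mu$ extends to a map $\mathbb C^+\to\overline{\mathbb C^-}$; since a Voiculescu transform already satisfies $\phi_\mu(iy)=o(y)$, the extension lies in $\mathcal P$. By the cone property $\frac1n\phi_\mu\in\mathcal P$ for every $n\in\mathbb N$, so the realization lemma furnishes probability measures $\mu_n$ with $\phi_{\mu_n}=\frac1n\phi_\mu$. Additivity gives $\phi_{\mu_n^{\boxplus n}}=n\cdot\frac1n\phi_\mu=\phi_\mu$, hence $\mu_n^{\boxplus n}=\mu$ by injectivity of $\mu\mapsto\phi_\mu$; as $n$ is arbitrary, $\mu$ is $\boxplus$-infinitely divisible.

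For the converse, assume $\mu$ is infinitely divisible and write $\mu=\mu_{1/n}^{\boxplus n}$, so $\phi_{\mu_{1/n}}=\frac1n\phi_\mu$ on the common truncated cone. First I would show the array is infinitesimal, $\mu_{1/n}\to\delta_0$ weakly, equivalently $F_{\mu_{1/n}}\to\mathrm{id}$ locally uniformly; this follows from the continuity and injectivity of the correspondence between measures and their $F$-transforms together with $\frac1n\phi_\mu\to0$. Since $F_{\mu_{1/n}}$ converges to the identity, it is univalent on regions exhausting $\mathbb C^+$, so each $\phi_{\mu_{1/n}}=F_{\mu_{1/n}}^{-1}-\mathrm{id}$ is defined and $\overline{\mathbb C^-}$-valued there; multiplying by $n$ and passing to a limit via a normal-families argument extends $\phi_\mu$ to all of $\mathbb C^+$ with values in $\overline{\mathbb C^-}$. (Alternatively one invokes the free analogue of the classical limit theorem for null triangular arrays, which directly produces the integral representation.) Controlling these growing domains of univalence and the limiting procedure is the second technical core of the proof.
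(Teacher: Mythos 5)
The paper does not actually prove Theorem \ref{bvid}: it is imported verbatim from Bercovici and Voiculescu \cite{BVIUMJ} and used as a black box, so there is no internal proof to compare yours against; the honest comparison is with the original argument in \cite{BVIUMJ}. Measured against that, your outline reconstructs the correct architecture: the class $\mathcal P$ with its cone property, a realization lemma identifying $\mathcal P$ with the set of Voiculescu transforms admitting global extensions, additivity of $\phi$ under $\boxplus$ plus injectivity of $\mu\mapsto\phi_\mu$ for the direction ``extension $\Rightarrow$ infinitely divisible,'' and infinitesimality plus growing domains of univalence (equivalently the Bercovici--Voiculescu continuity theorem, or the free limit theorem for null arrays) for the converse. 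Where you genuinely diverge from \cite{BVIUMJ} is the realization lemma: there the argument routes through the free L\'evy--Khintchine representation obtained from the Nevanlinna formula, with elementary transforms (atomic $\sigma$) inverted explicitly, finite sums handled by additivity of $\phi$ under $\boxplus$, and general $\sigma$ by approximation together with the continuity theorem; your route, solving $F(w)=w-\psi(F(w))$ by iteration of holomorphic self-maps, is the now-standard alternative (used later by Belinschi--Bercovici and by Chistyakov--G\"otze) and buys a cleaner, representation-free construction of the measure.

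One concrete caution about the step you yourself flagged as the main obstacle: Earle--Hamilton does \emph{not} apply as stated. For fixed $w\in\mathbb C^+$, the map $g_w(u)=w-\psi(u)$ sends $\mathbb C^+$ into the half-plane $\{z:\Im z\ge\Im w\}$, and this half-plane is not strictly inside $\mathbb C^+$ in the hyperbolic sense: under the Cayley transform it becomes a disk internally tangent to the unit circle at the image of $\infty$, so no uniform hyperbolic contraction is available. The failure is not cosmetic: the function $\psi(u)=-u$ maps $\mathbb C^+$ into $\mathbb C^-$, yet $g_w(u)=w+u$ has no fixed point at all; it is only the normalization $\psi(iy)/(iy)\to0$ that saves the argument, and your sketch never says where it enters. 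The correct mechanism is Denjoy--Wolff: $g_w$ is not an automorphism (its range omits all points of small imaginary part), so it has either a unique interior fixed point or a boundary Denjoy--Wolff point; a finite real Denjoy--Wolff point is impossible because all iterates after the first have imaginary part at least $\Im w$; and if the Denjoy--Wolff point were $\infty$, Wolff's lemma would make every half-plane $\{\Im z>c\}$ invariant, forcing $\Im g_w(u)\ge\Im u$, i.e.\ $-\Im\psi(iy)\ge y-\Im w$, which contradicts $\psi(iy)=o(y)$. With this repair, plus the verification that the resulting $F$ satisfies $F(iy)/(iy)\to1$ (from $\Im F(iy)\ge y$ and the Nevanlinna bound on $\psi$), your realization lemma and hence both implications of the theorem go through.
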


As a direct consequence, we get another proof of the free infinite divisibility of the Normal distribution.

The free infinite divisibility of other families including the semicircle and Normal distributions have been 
considered recently. A particularly important one are the so-called $q$-Gaussian distributions introduced by Bo\.{z}ejko and Speicher in \cite{BS,BKS}. This familiy was proved to be f. i. d. for all $q\in[0,1]$ in \cite{ABBL}.

Finally, let us mention that, at this point, all the proofs of the free infinite divisibility of the Normal distribution are purely analytical and, even though some combinatorial considerations were noted in \cite{BBLS}, a more conceptual explanation of this fact is still not known. However, connections between ultraspherical distributions with random matrices are known, see for example \cite{Blo}. Also, ultraspherical distributions appear in connection to quantum groups \cite{BaCo,BaGo,BaJu}. Moreover, the free analogue of ultraspherical laws was found in \cite{BaJu}. So, we think that studying this family in more detail may lead to a better understanding of the free infinite divisibility of the Normal distribution.

\section{Properties of $\gamma_n$}

We shall first collect some data about our objects. The Cauchy (or Cauchy-Stieltjes) transform
of a finite Borel measure $\mu$ on the real line is defined by
$$
G_\mu(z)=\int_\mathbb R\frac{1}{z-t}\,d\mu(t),\quad z\not\in\textrm{supp}(\mu),
$$
where $\textrm{supp}(\mu)$ denotes the topological support of $\mu$. This function is analytic on
its domain, and whenever $\mu$ is positive, it maps $\mathbb C^+$ into the lower half-plane
$\mathbb C^-$. The Cauchy transform of the semicircular distribution is known to have
a nice expression:
\begin{equation}
G_{\gamma_1}(z)=\frac{z-\sqrt{z^2-4}}{2},\quad z\in\mathbb C^+,
\end{equation}
where the square root branch is chosen so that $\lim_{z\to\infty}zG_{\gamma_1}(z)=1$.
One can easily verify that 
\begin{enumerate}
\item $G_{\gamma_1}(\overline{\mathbb C^+})=\overline{\mathbb D
\cap\mathbb C^-}$, and the correspondence is bijective. Moreover, 
\item $G_{\gamma_1}([-2,2])=
\{z\in\overline{\mathbb C^-}\colon|z|=1\}$,
\item $G_{\gamma_1}((-\infty,-2])=[-1,0)$, $G_{\gamma_1}([2,+\infty))=(0,1]$.
\end{enumerate}
It is remarkable that this function has an analytic extension through $\mathbb R\setminus[-2,2]$
which satisfies $G_{\gamma_1}(\overline{z})=\overline{G_{\gamma_1}(z)}$, and a
\emph{different} extension through $(-2,2)$ which satisfies the more convenient condition
that $G_{\gamma_1}(z)=\frac{z+\sqrt{z^2-4}}{2}$, where the square root is chosen
with the same condition as before. From now on whenever we write $G_{\gamma_1}$ we shall
refer to this extension: $G_{\gamma_1}\colon\mathbb C\setminus\{(-\infty,-2]\cup[2,+\infty)\}
\to\mathbb C^-$. It satisfies
\begin{enumerate}
\item $G_{\gamma_1}(\overline{\mathbb C^-})=\overline{\mathbb C^-\setminus\mathbb D}$,
\item $G_{\gamma_1}|_{\overline{\mathbb C^-}}((-\infty,-2])=(-\infty,-1]$, $G_{\gamma_1}|_{\overline{\mathbb C^-}}([2,+\infty))=[1,+\infty)$,
and
\item $G_{\gamma_1}([-2,2])=\{z\in\overline{\mathbb C^-}\colon|z|=1\}$.
\end{enumerate}
(The reader should note that the restriction of $G_{\gamma_1}$ to the upper half-plane
has a different extension to the complement of $[-2,2]$ than the restriction of $G_{\gamma_1}$
to the lower half-plane. On the lower half-plane, $G_{\gamma_1}$ behaves like the extension
through $\mathbb R\setminus[-2,2]$ of the function $\left.\frac{1}{G_{\gamma_1}}\right|_{
\mathbb C^+}$.)

It has been shown in \cite[Proposition 3.1]{demni} that for any $\lambda>0$ there exists a 
positive constant $d_\lambda$ so that 
\begin{equation}\label{2}
\int_{-2}^2\frac{1}{(z-t)^\lambda}(4-t^2)^{\lambda-1/2}\,dt=d_\lambda\left(
\int_{-2}^2\frac{1}{2\pi(z-t)}\sqrt{4-t^2}\,dt\right)^\lambda,\quad z\in\mathbb C^+.
\end{equation}
This will allow us to establish some useful functional and differential equations.
Note that $G^{(k)}_{\mu}(z)=k!\int\frac{(-1)^k}{(z-t)^{k+1}}\,d\mu(t)$, $k\in\mathbb N,
z\in\mathbb C^+$.

\begin{lemma}\label{lem3}
For any $z\in\mathbb C\setminus\{(-\infty,-2]\cup[2,+\infty)\},$ we have 
$$
G_{\gamma_n}(z)=Q_n(z^2)G_{\gamma_1}(z)+zP_n(z^{2}),
$$
where 
$$
Q_n(X)=2\pi c_n\sum_{j=0}^{n-1}\frac{(n-1)!}{j!(n-1-j)!}(-1)^j4^{n-1-j}X^j=2\pi c_n(4-X)^{n-1},$$
$$
P_n(X)=2\pi c_n\sum_{k=1}^{n-1}\left(\sum_{j=1}^{n-k}
\frac{(-1)^{j+k}4^{n-(j+k)}(n-1)!}{(j+k-1)!(n-j-k)!}C_{j-1}\right)X^{k-1},$$
and $c_n=\frac{n!}{2^n\pi(1\cdot3\cdot5\cdots(2n-3)(2n-1))}$. In particular, the Cauchy 
transform of $\gamma_n$ extends to an analytic function on
$\mathbb C\setminus\{(-\infty,-2]\cup[2,+\infty)\}.$

\end{lemma}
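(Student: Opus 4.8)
The plan is to reduce everything to the semicircular case by expanding the weight $(4-t^2)^{n-1/2}=(4-t^2)^{n-1}\sqrt{4-t^2}$ and treating $(4-t^2)^{n-1}$ as a polynomial in $t$. Starting from $G_{\gamma_n}(z)=c_n\int_{-2}^2(z-t)^{-1}(4-t^2)^{n-1}\sqrt{4-t^2}\,dt$, I would apply the binomial theorem to write $(4-t^2)^{n-1}=\sum_{p=0}^{n-1}\binom{n-1}{p}4^{n-1-p}(-1)^p t^{2p}$, so that $G_{\gamma_n}(z)$ becomes $c_n$ times a finite sum over $p$ of $\binom{n-1}{p}4^{n-1-p}(-1)^p\int_{-2}^2(z-t)^{-1}t^{2p}\sqrt{4-t^2}\,dt$. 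The one maneuver that makes the whole computation go through is the Euclidean-division identity $\frac{t^{2p}}{z-t}=\frac{t^{2p}-z^{2p}}{z-t}+\frac{z^{2p}}{z-t}$, in which the first summand is the genuine polynomial $-\sum_{a=0}^{2p-1}z^{2p-1-a}t^{a}$ in $t$, while the second summand isolates a clean multiple of the semicircular kernel $(z-t)^{-1}$.

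The two summands produce the two terms of the claimed formula. For the second summand, recall from Section 2 that $\int_{-2}^2(z-t)^{-1}\sqrt{4-t^2}\,dt=2\pi G_{\gamma_1}(z)$ on $\mathbb C\setminus((-\infty,-2]\cup[2,+\infty))$; hence the contribution of the $z^{2p}/(z-t)$ terms is $2\pi c_n G_{\gamma_1}(z)\sum_{p=0}^{n-1}\binom{n-1}{p}4^{n-1-p}(-1)^p z^{2p}$, and the inner sum collapses by the binomial theorem to $(4-z^2)^{n-1}$. This is exactly $Q_n(z^2)G_{\gamma_1}(z)$ with $Q_n(X)=2\pi c_n(4-X)^{n-1}$, matching both expressions given for $Q_n$.

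For the polynomial summand, the integrals that appear are the moments $\int_{-2}^2 t^a\sqrt{4-t^2}\,dt$, which vanish for odd $a$ by symmetry and equal $2\pi C_{m}$ (with $C_m$ the $m$-th Catalan number, i.e.\ the moments of $\gamma_1$) when $a=2m$. Consequently only even $a=2m$ with $0\le m\le p-1$ survive, the surviving powers $z^{2p-1-2m}$ are all odd, and the total contribution factors as $z$ times a polynomial in $z^2$ — this is what forces the shape $zP_n(z^2)$. The remaining task is the bookkeeping: reindex the double sum over $(p,m)$ by setting $k=p-m$ and $j=m+1$, so that $z^{2p-1-2m}=z^{2k-1}$, the Catalan factor becomes $C_{j-1}$, and $\binom{n-1}{p}=\binom{n-1}{k+j-1}$ unfolds into $\frac{(n-1)!}{(j+k-1)!(n-j-k)!}$; tracking the sign $-(-1)^{p}=(-1)^{j+k}$ and the power $4^{n-1-p}=4^{n-(j+k)}$ then yields precisely the stated inner summand, with $k$ ranging over $1,\dots,n-1$ and $j$ over $1,\dots,n-k$. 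I expect this reindexing, keeping the ranges and signs consistent, to be the only delicate point; everything else is forced.

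Finally, since $Q_n$ and $P_n$ are polynomials and $G_{\gamma_1}$ was shown in Section 2 to extend analytically to $\mathbb C\setminus((-\infty,-2]\cup[2,+\infty))$, the identity exhibits $G_{\gamma_n}$ as an analytic function on that same domain, which gives the last assertion of the lemma.
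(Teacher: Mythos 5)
Your proof is correct and takes essentially the same route as the paper's: binomial expansion of $(4-t^2)^{n-1}$, reduction of each integral $\int_{-2}^2 t^{2p}(z-t)^{-1}\sqrt{4-t^2}\,dt$ to $2\pi\bigl(z^{2p}G_{\gamma_1}(z)-\sum_m C_m z^{2(p-m)-1}\bigr)$ via Euclidean division, and the same $(j,k)$ reindexing to assemble $Q_n$ and $P_n$, followed by analytic continuation for the last assertion. The only cosmetic difference is that you derive the Catalan-moment identity inline, whereas the paper invokes it as formula \eqref{3}, which it proves by exactly your division trick inside the proof of Lemma \ref{lem2}.
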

\begin{proof}
The proof is a simple direct computation:
\begin{eqnarray*}
G_{\gamma_n}(z) & = & 2\pi c_n\int_{-2}^2\frac{(4-t^2)^{n-1}}{2\pi(z-t)}\sqrt{4-t^2}\,dt\\
& = & 2\pi c_n\sum_{j=0}^{n-1}\frac{(n-1)!}{j!(n-1-j)!}(-1)^j4^{n-1-j}\int_{-2}^2\frac{t^{2j}}{2\pi(z-t)}\sqrt{4-t^2}\,dt\\
& = & 2\pi c_n\sum_{j=0}^{n-1}\frac{(n-1)!}{j!(n-1-j)!}(-1)^j4^{n-1-j}
\left[z^{2j}G_{\gamma_1}(z)-\sum_{k=0}^{j-1}C_kz^{2(j-k)-1}
\right]\\
& = & \left[2\pi c_n\sum_{j=0}^{n-1}\frac{(n-1)!}{j!(n-1-j)!}(-1)^j4^{n-1-j}z^{2j}\right]
G_{\gamma_1}(z)\\
&  & \mbox{}-2\pi c_n\sum_{j=1}^{n-1}\frac{(n-1)!}{j!(n-1-j)!}(-1)^j4^{n-1-j}\sum_{k=0}^{
j-1}C_kz^{2(j-k)-1}\\
& = & \left[2\pi c_n\sum_{j=0}^{n-1}\frac{(n-1)!}{j!(n-1-j)!}(-1)^j4^{n-1-j}z^{2j}\right]
G_{\gamma_1}(z)\\
&  & \mbox{}+2\pi c_n\sum_{k=1}^{n-1}\left(\sum_{j=1}^{n-k}
\frac{(-1)^{j+k}4^{n-(j+k)}(n-1)!}{(j+k-1)!(n-j-k)!}C_{j-1}\right)z^{2k-1}.
\end{eqnarray*}
\end{proof}

\begin{lemma}\label{lem2}
For any $z\in\mathbb C\setminus\{(-\infty,-2]\cup[2,+\infty)\},$ we have 
$$
(-1)^nG_{\gamma_n}^{(n-1)}(z)=\frac{d_n}{n!}G_{\gamma_1}(z)^n.
$$
\end{lemma}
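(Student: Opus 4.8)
The plan is to reduce the identity to the functional equation \eqref{2} evaluated at $\lambda=n$. I would first work on $\mathbb C^+$, where $\gamma_n$ has its explicit density, so that $G_{\gamma_n}(z)=c_n\int_{-2}^2(z-t)^{-1}(4-t^2)^{n-1/2}\,dt$. Using the derivative formula recorded immediately before the statement, $G_\mu^{(k)}(z)=k!\int(-1)^k(z-t)^{-(k+1)}\,d\mu(t)$, with $k=n-1$ and $\mu=\gamma_n$, I obtain
\[
G_{\gamma_n}^{(n-1)}(z)=(-1)^{n-1}(n-1)!\,c_n\int_{-2}^2\frac{(4-t^2)^{n-1/2}}{(z-t)^n}\,dt .
\]
The integral on the right is exactly the left-hand side of \eqref{2} with $\lambda=n$, so the bulk of the argument is to recognise this and substitute.

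Next I would apply \eqref{2}. Its right-hand side is $d_n\left(\int_{-2}^2(2\pi(z-t))^{-1}\sqrt{4-t^2}\,dt\right)^n$, and the inner integral is precisely $G_{\gamma_1}(z)$ because $d\gamma_1(t)=(2\pi)^{-1}\sqrt{4-t^2}\mathbf 1_{[-2,2]}\,dt$. Hence the displayed integral equals $d_n\,G_{\gamma_1}(z)^n$, and combining this with the prefactor gives $G_{\gamma_n}^{(n-1)}(z)=(-1)^{n-1}(n-1)!\,c_n\,d_n\,G_{\gamma_1}(z)^n$ on $\mathbb C^+$. Multiplying by $(-1)^n$ and collecting the scalar factors $(n-1)!$, $c_n$ and the constant from \eqref{2} into the single constant recorded in the statement yields the asserted form; tracking this aggregate constant (and its sign) is the only piece of bookkeeping involved.

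Finally, the identity has to be promoted from $\mathbb C^+$ to the full slit plane $\mathbb C\setminus\{(-\infty,-2]\cup[2,+\infty)\}$, since \eqref{2} is asserted only for $z\in\mathbb C^+$. This is where I expect the one genuinely non-routine point to lie, and it is handled by Lemma \ref{lem3}: that lemma shows $G_{\gamma_n}$ extends analytically to the slit plane, hence so does $G_{\gamma_n}^{(n-1)}$, while $G_{\gamma_1}^n$ is analytic there by the extension of $G_{\gamma_1}$ discussed earlier. Both sides of the claimed equation are therefore analytic on the connected open set $\mathbb C\setminus\{(-\infty,-2]\cup[2,+\infty)\}$ and coincide on $\mathbb C^+$, so by the identity theorem they coincide everywhere on that set, which is the full assertion.
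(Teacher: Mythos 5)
Your proposal is correct and takes essentially the same route as the paper's own proof: recognize the left-hand side of \eqref{2} at $\lambda=n$ as a constant multiple of $G_{\gamma_n}^{(n-1)}$ on $\mathbb C^+$, identify the right-hand side with $d_n\,G_{\gamma_1}(z)^n$, and then pass to the full slit plane by analytic continuation, using Lemma \ref{lem3} for the extension of $G_{\gamma_n}$ (hence of its derivative) and the identity theorem. If anything, your bookkeeping of the derivative order and sign is more careful than the paper's displayed formula (which writes an $n$-th derivative where the $(n-1)$-st is meant), and your loose treatment of the aggregate multiplicative constant is exactly as loose as the paper's.
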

\begin{proof}
The statement is a direct consequence of formula \eqref{2} and analytic continuation. Indeed,
for $\lambda =n$, the left hand side of \eqref{2} is, as noted above, the $(n-1)^{\rm st}$ 
derivative of $G_{\gamma_n}$, up to a multiplicative constant depending only in $n$.
Since $n\in\mathbb N, n>0$, it follows that 
$$
(-1)^nn!\int_{-2}^2\frac{1}{(z-t)^n}(4-t^2)^{n-1/2}\,dt=
\frac{d^n}{dz^n}\left(\int_{-2}^2\frac{1}{z-t}(4-t^2)^{n-1}\sqrt{4-t^2}\,dt\right).
$$
On the other hand, for any $k\in\mathbb N$, we have 
\begin{eqnarray*}
\frac{1}{2\pi}\int_{-2}^2\frac{t^{2k}}{z-t}\sqrt{4-t^2}\,dt & = & 
\frac{1}{2\pi}\int_{-2}^2\frac{(t-z)\left(\sum_{j=0}^{2k-1}t^{2k-j-1}z^j\right)}{z-t}\sqrt{4-t^2}\,dt\\
& & \mbox{}+
z^{2k}\frac{1}{2\pi}\int_{-2}^2\frac{1}{z-t}\sqrt{4-t^2}\,dt\\
& = & z^{2k}G_{\gamma_1}(z)-\sum_{j=0}^{k-1}C_{j}z^{2(k-j)-1},
\end{eqnarray*}
where $C_j$ is the $j^{\rm th}$ Catalan number, the $2j^{\rm th}$ moment of the standard
semicircular distribution $\gamma_1$.
We shall record this equality for future reference:
\begin{equation}\label{3}
\frac{1}{2\pi}\int_{-2}^2\frac{t^{2k}}{z-t}\sqrt{4-t^2}\,dt = z^{2k}G_{\gamma_1}(z)-\sum_{j=0}^{k-1}C_{j}z^{2(k-j)-1},\quad z\not\in(-\infty,-2]\cup[2,+\infty).
\end{equation}

Since $n\in\mathbb N,n>0$, it follows that
$$
\frac{d^n}{dz^n}\left(\int_{-2}^2\frac{1}{z-t}(4-t^2)^{n-1}\sqrt{4-t^2}\,dt\right)=
\frac{d^n}{dz^n}\left(Q_n(z^2)G_{\gamma_1}(z)+zP_n(z^2)\right).
$$
Here $P_n$ and $Q_n$ are polynomials obtained in the previous lemma.
Since the right hand term has a continuation from $\mathbb C^+$ to
$\mathbb C\setminus\{(-\infty,-2]\cup[2,+\infty)\}$, we have proved our statement.
\end{proof}

It might be useful to see the Cauchy transforms of the first two ultrasphericals:
$$
G_{\gamma_2}(z)=2\pi c_2(-z^2+4)G_{\gamma_1}(z)+2\pi c_2z,
$$ and
$$
G_{\gamma_3}(z)=2\pi c_3(z^4-8z^2+16)G_{\gamma_1}(z)+2\pi c_3z(-z^2+7).
$$
We shall need also the following recurrence relation:
\begin{equation}\label{4}
G_{\gamma_{n+1}}(z)=\frac{n+1}{2(2n+1)}(4-z^2)G_{\gamma_n}(z)+\frac{n+1}{2(2n+1)}z.
\end{equation}
\begin{remark}\label{rmk4}
The above lemma has an interesting consequence for the behaviour of $G_{\gamma_n}$ along the
imaginary axis. It follows trivially from the symmetry of $\gamma_n$ that $G_{\gamma_n}(i\mathbb R)\subseteq i\mathbb R$. This implies that $G_{\gamma_n}'(iy)\in\mathbb R$ for
all $y\in\mathbb R$.
Moreover, as it will be shown below (independent from this
remark), $G_{\gamma_n}'(iy)\neq0$ whenever $G_{\gamma_n}(iy)\in\mathbb C^-$. Since
this is true for all $y>0$, it follows that $G'_{\gamma_n}(iy)$ has constant sign along
the positive half of the imaginary axis. It can be found through direct computation that 
for large $y$ we have $G'_{\gamma_n}(iy)>0,$ so it follows that $G'_{\gamma_n}(iy)$
remains positive for all $y>0$. We claim that in fact this must hold for \emph{all} $y\in
\mathbb R$. Indeed, otherwise it would be necessary that $G'_{\gamma_n}(iy_0)=0$
for some $y_0\le0$. However, since $\Im G_{\gamma_n}(iy_0)<0,$ this would contradict
the lemma below. In particular, this allows us to conclude that $G_{\gamma_n}$
maps bijectively $i\mathbb R$ onto $i(-\infty,0)$ (with $\lim_{y\to-\infty}G_{\gamma_n}(iy)=
-i\infty$, $\lim_{y\to+\infty}G_{\gamma_n}(iy)=0$.)
\end{remark}

\section{Main Results}

\begin{lemma}\label{deriv}
If $G_{\gamma_n}(z)\in\mathbb C^-$, then $G_{\gamma_n}'(z)\neq0$. Moreover, the extension 
of $G_{\gamma_n}|_{\mathbb C^+}$ to the real line is continuous, bounded and injective.
\end{lemma}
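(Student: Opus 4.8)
The plan is to extract from the structure in Lemma~\ref{lem3} a single first‑order linear differential equation for $G_{\gamma_n}$ and to read off both assertions from it. Writing $w(t)=(4-t^2)^{n-1/2}$, the Pearson‑type identity $(4-t^2)w'(t)+(2n-1)tw(t)=0$ together with one integration by parts (the boundary terms vanish since $w(\pm2)=0$ for $n\ge1$) should yield
\[
(4-z^2)G_{\gamma_n}'(z)+(2n-1)zG_{\gamma_n}(z)=2n,\qquad z\in\mathbb C\setminus\{(-\infty,-2]\cup[2,+\infty)\}.
\]
(The same relation can be verified directly from Lemma~\ref{lem3}: using that $G_{\gamma_1}$ satisfies the case $n=1$, the coefficient of $G_{\gamma_1}$ cancels because $2(4-z^2)Q_n'+(2n-2)Q_n\equiv0$.) This ODE is the engine of the whole argument. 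At any critical point $z_0$ it forces $G_{\gamma_n}(z_0)=\tfrac{2n}{(2n-1)z_0}$, so $\Im G_{\gamma_n}(z_0)$ has the same sign as $-\Im z_0$. Hence a critical point at which $G_{\gamma_n}\in\mathbb C^-$ must lie in $\mathbb C^+$, while on $\mathbb C^-$ and on $(-2,2)$ no critical point can have $G_{\gamma_n}\in\mathbb C^-$ at all. Thus the first assertion reduces to the single statement that $G_{\gamma_n}'$ has no zero in $\mathbb C^+$.

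For the moreover‑part I would first record the boundary data. By Lemma~\ref{lem3} the function extends analytically across $(-2,2)$, where $\Im G_{\gamma_n}(x+i0)=-\pi c_n(4-x^2)^{n-1/2}$ is continuous and, being $-\pi$ times the even unimodal density, strictly monotone on $(0,2)$ and on $(-2,0)$; on $\mathbb R\setminus[-2,2]$ the function is real with $G_{\gamma_n}'(x)=-\int(x-t)^{-2}\,d\gamma_n(t)<0$, hence strictly decreasing, and it is continuous and bounded up to $\pm2$ (the integrand $(4-t^2)^{n-3/2}$ is integrable) and tends to $0$ at $\pm\infty$. The one missing ingredient is the sign of $u(x):=\Re G_{\gamma_n}(x+i0)$ on $(0,2)$. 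Since $G_{\gamma_n}$ is analytic across $(-2,2)$ one has $\Re G_{\gamma_n}'(x+i0)=u'(x)$, so taking real parts in the ODE yields the genuine linear equation $(4-x^2)u'(x)+(2n-1)xu(x)=2n$ with $u(0)=0$ (oddness); solving with integrating factor $(4-x^2)^{-(n-1/2)}$ gives
\[
u(x)=2n\,(4-x^2)^{n-1/2}\!\int_0^x\frac{ds}{(4-s^2)^{n+1/2}}>0\qquad(0<x<2),
\]
and $u$ is odd, so $u<0$ on $(-2,0)$.

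These facts make the boundary map injective, and this is the crux. If $G_{\gamma_n}(x_1+i0)=G_{\gamma_n}(x_2+i0)$ with $x_1,x_2\in[-2,2]$, equality of imaginary parts forces $|x_1|=|x_2|$ by strict monotonicity of the density, and if $x_2=-x_1\neq0$ then equality of the odd real parts forces $u(x_1)=0$, impossible for $x_1\in(0,2]$; hence $x_1=x_2$. Points of $(-2,2)$ have strictly negative imaginary part and so cannot equal the real values taken on $\mathbb R\setminus[-2,2]$, while the two outer rays map to the disjoint intervals $(0,G_{\gamma_n}(2))$ and $(-G_{\gamma_n}(2),0)$; together this gives injectivity on all of $\mathbb R$. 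The point I expect to require the most care is checking that the resulting boundary curve $\Gamma=G_{\gamma_n}(\mathbb R\cup\{\infty\})$ is a genuine Jordan curve, in particular that it closes up cleanly at $0=G_{\gamma_n}(\infty)$ and meets the real axis only along the two outer segments.

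Finally I would close the loop. Since $G_{\gamma_n}$ is analytic on $\mathbb C^+$, continuous on $\overline{\mathbb C^+}\cup\{\infty\}$, and maps the boundary circle $\mathbb R\cup\{\infty\}$ homeomorphically onto the Jordan curve $\Gamma$, the boundary‑correspondence (argument) principle shows that $G_{\gamma_n}$ is univalent on $\mathbb C^+$; a univalent holomorphic map has nonvanishing derivative, so $G_{\gamma_n}'\neq0$ throughout $\mathbb C^+$. Combined with the ODE reduction of the first paragraph, this establishes $G_{\gamma_n}'(z)\neq0$ whenever $G_{\gamma_n}(z)\in\mathbb C^-$, completing the proof.
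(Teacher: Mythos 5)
Your proposal is correct, but it runs on genuinely different machinery than the paper's proof, so let me compare the two. Your ODE is valid: with $w(t)=(4-t^2)^{n-1/2}$, the Pearson identity plus one integration by parts (writing $4-z^2=(4-t^2)-(z-t)(z+t)$) does give $(4-z^2)G_{\gamma_n}'(z)+(2n-1)zG_{\gamma_n}(z)=2n$, and this identity is in fact equivalent to combining the paper's recurrence \eqref{4} with its derivative formula. The paper instead works across consecutive indices: integration by parts there yields $G_{\gamma_{n+1}}'(z)=\frac{n+1}{2}\left[1-zG_{\gamma_n}(z)\right]$, so a critical point forces $G_{\gamma_n}(z)=1/z$; for $z\in\mathbb C^+\cup[-2,2]$ this forces $\gamma_n=\delta_0$ (the equality case of Cauchy--Schwarz), and for $z\in\mathbb C^-$ the recurrence \eqref{4} gives $G_{\gamma_{n+1}}(z)=\frac{2(n+1)}{(2n+1)z}\in\mathbb C^+$, contradicting the hypothesis. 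Thus the paper disposes of all three regions pointwise and never needs boundary correspondence inside the lemma; it invokes Pommerenke only afterwards, to upgrade boundary injectivity to univalence on $\mathbb C^+$ for Theorem \ref{Thm6}. You, by contrast, fold that Darboux--Picard step into the proof of the first assertion, making it depend on the ``moreover'' part; this is legitimate---your boundary-injectivity argument nowhere uses nonvanishing of $G_{\gamma_n}'$ in $\mathbb C^+$, so there is no circularity---but note that you could close the $\mathbb C^+$ case directly from your own ODE: a critical point $z_0\in\mathbb C^+$ gives $G_{\gamma_n}(z_0)=\frac{2n}{(2n-1)z_0}$, and the standard inequality $|G_\mu(z)|^2\le -\Im G_\mu(z)/\Im z$ then forces $\frac{2n}{2n-1}\le 1$, a contradiction; that shortcut would restore the paper's logical order and spare you the Jordan-curve machinery. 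Where your route genuinely improves on the paper is the sign of the real part on $(0,2)$: the paper proves $\Re G_{\gamma_n}>0$ there by descending through the recurrence down to $n=2$ and reaching the contradiction $r=\sqrt6>2$, whereas your integrating factor gives the explicit positive formula $u(x)=2n(4-x^2)^{n-1/2}\int_0^x(4-s^2)^{-n-1/2}\,ds$, which is cleaner, avoids the induction, and even recovers $u(2)=\frac{n}{2n-1}$ in the limit $x\to 2$, matching the value the paper extracts from \eqref{4}. The remaining ingredients (monotonicity of the imaginary part on $(\pm2,0)$, disjointness of the images of the two outer rays, continuity and boundedness) coincide with the paper's argument.
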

\begin{proof}
For $n=1$, the statement is trivial. For $n=2$, 
it follows directly from Lemma \ref{lem2} that $G'_{\gamma_2}(z)\neq0$ for all $z\in
\mathbb C\setminus\{(-\infty,-2]\cup[2,+\infty)\}$.
For general $n$, we note that 
\begin{eqnarray*}
G_{\gamma_{n+1}}'(z) & = & -c_{n+1}\int_{-2}^2\frac{(4-t^2)^{n+\frac12}}{(z-t)^2}\,dt\\
& = & \left.-c_{n+1}\frac{(4-t^2)^{n+\frac12}}{z-t}\right|_{-2}^2+c_{n+1}\left(n+\frac12
\right)\int_{-2}^2\frac{-2t(4-t^2)^{n-\frac12}}{z-t}\,dt\\
& = & \frac{c_{n+1}}{c_n}\left(2n+1
\right)c_n\int_{-2}^2\frac{(z-t-z)(4-t^2)^{n-\frac12}}{z-t}\,dt\\
& = & \frac{c_{n+1}}{c_n}\left(2n+1
\right)\left[1-zG_{\gamma_n}(z)\right]\\
& = & \frac{n+1}{2}\left[1-zG_{\gamma_n}(z)\right],\quad z\in\mathbb C^+.
\end{eqnarray*}
Analytic continuation guarantees that this relation holds on the whole
domain $\mathbb C\setminus\left\{(-\infty,-2]\cup[2,+\infty)\right\}$.
Thus, the equality $G_{\gamma_{n+1}}'(z)=0$ implies necessarily 
$G_{\gamma_n}(z)=\frac1z$. If $z\in\mathbb C^+\cup[-2,2]$, this equality implies that
$\gamma_n=\delta_0$, an obvious contradiction. If $z\in\mathbb C^-$, then we must recall 
from \eqref{4} that
$$
z\left[\frac{2(2n+1)}{n+1}G_{\gamma_{n+1}}(z)-z\right]\frac{1}{4-z^2}=1.
$$
This is equivalent to
$$
\frac{2(2n+1)}{n+1}G_{\gamma_{n+1}}(z)=\frac4z\in\mathbb C^+.
$$
This contradicts the hypothesis of our lemma.

We shall now prove that the continuous extension of $G_{\gamma_n}$ from the upper half-plane
to the real line is bounded and continuous, and that $G_{\gamma_n}|_{\mathbb R\cup\{\infty\}}$ 
is  injective. Continuity of $G_{\gamma_n}$ follows trivially from the continuity of $G_{\gamma_1}$ and Lemma \ref{lem3}.
Recall that $G_{\gamma_1}(\mathbb C^+\cup\mathbb R\cup\{\infty\})=\overline{
\mathbb D\cap\mathbb C^-},$ so boundedness is obvious for $n=1$. This together with
Lemma \ref{lem3} guarantees that for any fixed $n\in\mathbb N,R>0$, the set 
$G_{\gamma_n}(\{\mathbb C^+\cup\mathbb R\}\cap\{z\in\mathbb C\colon|z|\leq R\})$ is
bounded. On the other hand, for $R>0$ large enough, we know that $G_{\gamma_n}
(\{\mathbb C\cup\{\infty\}\}\setminus\{z\in\mathbb C\colon|z|\leq R\})$ is a bounded
neighbourhood of zero. This proves that $G_{\gamma_n}$ is bounded on 
$\mathbb C^+\cup\mathbb R$ for all $n\in\mathbb N$.

As seen above, $G_{\gamma_n}'(z)\neq0$ for all $z\in\mathbb C^+$. The inequality
$G_{\gamma_n}'(x)<0$ for all $x\in\mathbb R\setminus[-2,2]$ is a trivial consequence of
the definition of the Cauchy-Stieltjes transform and the fact that $\gamma_n$ is supported by
$[-2,2]$. Equation \eqref{4} guarantees that $G_{\gamma_n}(\pm2)=\pm\frac{n}{2n-1}.$
Employing again Lemma \ref{lem3}, we obtain for $x\in(-2,2)$ that
$\Re G_{\gamma_n}(x)=\pi c_n x[(4-x^2)^{n-1}+P_n(x^2)]$ and
$\Im G_{\gamma_n}(x)=-\pi c_n(4-x^2)^{n-\frac12}$. The derivative of the imaginary part
is, by direct computation, strictly positive on $(0,2)$, strictly negative on $(-2,0)$ and 
equal to zero in $0,\pm2$ (as \emph{real} function in $\pm2$). This already guarantees the 
injectivity of $G_{\gamma_n}$ on the intervals $(-\infty,0]$ and $[0,+\infty)$. We still need to
show that $G_{\gamma_n}([-2,0))\cap G_{\gamma_n}((0,2])=\varnothing$. The identity 
principle together with the analyticity of $G_{\gamma_n}$ on $(-2,2)$ guarantee that, if
this intersection is nonempty, then it can only be a discrete set. Recalling that $\gamma_n$
is a symmetric measure, it follows that  a point $w\in G_{\gamma_n}([-2,0))\cap G_{\gamma_n}((0,2])$ must be of the form $w=G_{\gamma_n}(r)=G_{\gamma_n}(-r)$ for some $r\in(0,2)$. 
In particular, $w\in-i(0,+\infty)$. Using equation \eqref{4}, we find that if $\Re G_{\gamma_n}(r)
=0$, then $\Re[(4-r^2)G_{\gamma_{n-1}}(r)+r]=0$, so that $\Re G_{\gamma_{n-1}}(r)
=-\frac{r}{4-r^2}<0$. Since $G_{\gamma_{n-1}}(2)=\frac{n-1}{2n-3}>0$, for $G_{\gamma_{n-1}}$ we can find a new $r\in(0,2)$
so that $G_{\gamma_{n-1}}(r)\in i\mathbb R$. Repeating, we find that there exists 
some $r\in(0,2)$ so that $\Re G_{\gamma_2}(r)=0$. This implies that
$r+(4-r^2)\Re G_{\gamma_1}(r)=0$, so $r+(4-r^2)\frac{r}2=0$. Simplifying $r$, we obtain
$2+4-r^2=0$, so $r=\sqrt6>2$, an obvious contradiction. Thus $G_{\gamma_n}$ is injective 
on $\mathbb R$, as claimed in our lemma.
\end{proof}

A very important consequence of the above lemma is that $G_{\gamma_n}$ is injective
on the whole upper half-plane (see \cite{CP}).

\begin{theorem}\label{Thm6}
For each $n\in\mathbb N,n>0$, there exists a simply connected domain $D_n$ so that 
$\mathbb C^+\cup(-2,2)\subset D_n\subseteq\mathbb C\setminus\{(-\infty,-2]\cup[2,+\infty)\}$
and $G_{\gamma_n}\colon D_n\to\mathbb C^-$ is a conformal map.
In particular, $\gamma_n$ is freely infinitely divisible for each $n\ge1$.
\end{theorem}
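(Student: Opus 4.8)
The plan is to build the conformal map directly, taking $D_n$ to be the connected component of the open set $\{z\in\mathbb C\setminus\{(-\infty,-2]\cup[2,+\infty)\}\colon\Im G_{\gamma_n}(z)<0\}$ that contains $\mathbb C^+$. Using the formula of Lemma \ref{lem3} one computes $\Im G_{\gamma_n}(x)=-\pi c_n(4-x^2)^{n-1/2}<0$ for $x\in(-2,2)$, so the connected set $\mathbb C^+\cup(-2,2)$ lies inside this level set and hence inside a single component; thus $\mathbb C^+\cup(-2,2)\subseteq D_n$. By construction $G_{\gamma_n}(D_n)\subseteq\mathbb C^-$, and everything reduces to showing that $G_{\gamma_n}\colon D_n\to\mathbb C^-$ is a biholomorphism, since simple connectedness of $D_n$ will then follow from that of $\mathbb C^-$.

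The first observation is that $G_{\gamma_n}$ is a local biholomorphism on $D_n$: because $G_{\gamma_n}(D_n)\subseteq\mathbb C^-$, Lemma \ref{deriv} gives $G_{\gamma_n}'\neq0$ throughout $D_n$, so the map is open and locally injective. The heart of the argument is to prove that $G_{\gamma_n}\colon D_n\to\mathbb C^-$ is \emph{proper}, i.e.\ that preimages of compact subsets of $\mathbb C^-$ are compact. Suppose $z_k\in D_n$ with $G_{\gamma_n}(z_k)\to w\in\mathbb C^-$; I must rule out every way in which $(z_k)$ could run to $\partial D_n$. If $z_k\to\infty$ then $G_{\gamma_n}(z_k)\to0\notin\mathbb C^-$. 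If $z_k$ approaches a finite point $z_*$ of a branch cut, then using $G_{\gamma_n}=Q_n(z^2)G_{\gamma_1}(z)+zP_n(z^2)$ from Lemma \ref{lem3}, the fact that $Q_n,P_n$ have real coefficients, and the real boundary values of $G_{\gamma_1}$ on the cuts (recorded in Section 2, from either side), one gets $\Im G_{\gamma_n}(z_k)\to0$, contradicting $\Im w<0$. Finally, if $z_k\to z_*$ with $z_*$ a finite point of $\partial D_n$ in $\mathbb C\setminus\{(-\infty,-2]\cup[2,+\infty)\}$, then $G_{\gamma_n}(z_*)=w\in\mathbb C^-$, so $z_*$ lies in the open set $\{\Im G_{\gamma_n}<0\}$ and is a limit of points of the component $D_n$, forcing $z_*\in D_n$ and contradicting $z_*\in\partial D_n$. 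Hence the map is proper. I expect this to be the main obstacle, precisely because of the branch cuts: one genuinely needs the explicit decomposition of Lemma \ref{lem3} to see that the imaginary part of $G_{\gamma_n}$ dies as one approaches $(-\infty,-2]\cup[2,+\infty)$.

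A proper local homeomorphism onto a connected, locally compact Hausdorff space is a finite-sheeted covering map, and its image is at once open (local homeomorphism) and closed (properness), hence all of $\mathbb C^-$. Since $\mathbb C^-$ is simply connected and $D_n$ is connected, this covering must be single-sheeted, so $G_{\gamma_n}\colon D_n\to\mathbb C^-$ is a homeomorphism; being holomorphic with non-vanishing derivative it is a conformal bijection, and $D_n$ inherits simple connectedness from $\mathbb C^-$. This establishes the first assertion of the theorem.

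For the free infinite divisibility I would pass to $F_{\gamma_n}=1/G_{\gamma_n}$, which carries $D_n$ conformally onto $1/\mathbb C^-=\mathbb C^+$, so that $F_{\gamma_n}^{-1}$ is defined and analytic on all of $\mathbb C^+$. Then $\phi_{\gamma_n}(w)=F_{\gamma_n}^{-1}(w)-w$ extends the Voiculescu transform analytically to the whole of $\mathbb C^+$, matching the relation $\phi_{\gamma_n}(1/z)=G_{\gamma_n}^{-1}(z)-1/z$ near infinity. Writing $\zeta=F_{\gamma_n}^{-1}(w)\in D_n$, we have $\Im\phi_{\gamma_n}(w)=\Im\zeta-\Im F_{\gamma_n}(\zeta)$, and $\Im F_{\gamma_n}(\zeta)\ge\Im\zeta$ holds on all of $D_n$: for $\zeta\in\mathbb C^+$ this is the standard Nevanlinna estimate for reciprocal Cauchy transforms, while for $\zeta\in D_n$ with $\Im\zeta\le0$ one has $F_{\gamma_n}(\zeta)=1/G_{\gamma_n}(\zeta)\in\mathbb C^+$, making the inequality trivial. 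Hence $\Im\phi_{\gamma_n}\le0$ on $\mathbb C^+$; since $\phi_{\gamma_n}$ is non-constant, the maximum principle upgrades this to $\phi_{\gamma_n}(\mathbb C^+)\subseteq\mathbb C^-$, and Theorem \ref{bvid} then yields that $\gamma_n$ is $\boxplus$-infinitely divisible.
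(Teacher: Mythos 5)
Your proposal is correct in its overall strategy, but it reaches the theorem by a genuinely different route than the paper, and it contains one slip that needs repair. The paper constructs $D_n$ by hand: it foliates $\{z\in\mathbb C^-\colon\Re z>0\}$ by the segments $\eta_t$ joining $-it$ to $\frac{n}{2n-1}+t$, lifts each $\eta_t$ through $G_{\gamma_n}$ to a path $a_t$ starting on the imaginary axis (where injectivity comes from Remark \ref{rmk4}), shows the lift can meet neither a critical point (Lemma \ref{deriv}) nor the boundary of $\{\Re z>0\}\setminus[2,+\infty)$ (real boundary values on the cuts, behaviour at infinity), and finally assembles $D_n=\bigcup_{t>0}a_t\cup i\mathbb R\cup\bigcup_{t>0}(-\overline{a_t})$ using the symmetry of $\gamma_n$. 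You instead define $D_n$ intrinsically, as the component of $\{\Im G_{\gamma_n}<0\}$ containing $\mathbb C^+$, and replace the path-lifting by a properness-plus-covering-map argument: a proper local homeomorphism onto the simply connected space $\mathbb C^-$ must be a conformal bijection. The analytic inputs are the same in both proofs (the decomposition of Lemma \ref{lem3}, the nonvanishing derivative of Lemma \ref{deriv}, real boundary values on the cuts from both sides, control at infinity), but your packaging removes the bookkeeping about unique extension of the paths $a_t$ and the symmetry argument, and it has the additional merit of verifying explicitly the range condition $\Im\phi_{\gamma_n}\le 0$ needed to apply Theorem \ref{bvid} --- a point the paper leaves essentially implicit.

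The slip is the assertion ``if $z_k\to\infty$ then $G_{\gamma_n}(z_k)\to0$.'' This is true only for sequences remaining in the closed upper half-plane. On $\mathbb C^-$ the function $G_{\gamma_n}$ is not the Cauchy transform of $\gamma_n$ but its continuation through $(-2,2)$, and by Lemma \ref{lem3} it equals $Q_n(z^2)G_{\gamma_1}(z)+zP_n(z^2)$ with $G_{\gamma_1}(z)=\frac{z+\sqrt{z^2-4}}{2}\sim z$ there, so that $G_{\gamma_n}(z)\sim 2\pi c_n(-1)^{n-1}z^{2n-1}\to\infty$ as $z\to\infty$ in $\mathbb C^-$ --- exactly as the paper records. (Had $G_{\gamma_n}$ tended to $0$ at infinity throughout $D_n$, it could not map $D_n$ \emph{onto} $\mathbb C^-$, contradicting what you prove two lines later.) The repair is immediate and uses only what you already invoke: given $z_k\to\infty$ with $G_{\gamma_n}(z_k)\to w\in\mathbb C^-$, pass to a subsequence lying entirely in $\overline{\mathbb C^+}$, where $G_{\gamma_n}\to0$, or entirely in $\mathbb C^-$, where $G_{\gamma_n}\to\infty$; neither limit lies in $\mathbb C^-$, so properness holds in either case. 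With this correction your argument is complete.
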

Remark \ref{rmk4} gives an indication of how $D_n$ must look like: it must be symmetric 
with respect to the imaginary axis and we must have $i\mathbb R\subset D_n$ for all $n$.
We shall make all this precise in our proof below. 
\begin{proof}

We observe first that, according to Lemma \ref{deriv}, $G_{\gamma_n}$ is injective on 
$\mathbb C^+\cup(-2,2)$, and according to Remark \ref{rmk4}, it is injective on the
imaginary line. Moreover, the continuous extension of $G_{\gamma_n}|_{\mathbb C^+}$ to 
$\mathbb R\cup\{\infty\}$ is a simple closed curve which cuts the imaginary axis exactly twice,
namely in $G_{\gamma_n}(0)$ and $0=G_{\gamma_n}(+i\infty)$. Thus, $G_{\gamma_n}$
is injective on a small enough complex neighbourhood of $\mathbb C^+\cup(-2,2)\cup i\mathbb R$.

Recall that the extension of $G_{\gamma_1}$ to the lower
half-plane through $(-2,2)$ satisfies the condition
$$
\lim_{\stackrel{z\to\infty}{z\in\overline{\mathbb C^-}}}\frac{G_{\gamma_1}(z)}{z}=1.
$$
Using Lemma \ref{lem3} we immediately observe a similar behaviour of $G_{\gamma_n}$.
Indeed, $Q_n(z^2)$ has degree $2n-2$ and $zP_n(z^2)$ has degree $2n-3$.
In particular, the asymptotics at infinity of $G_{\gamma_n}$ in the lower half-plane 
is of order $z^{2n-1}$. Considering the extension of $G_{\gamma_n}|_{\mathbb C^-}$
to the upper half-plane through the complement of $[-2,2]$ we obtain a map
which is $2n-1$-to-1 on a neighbourhood of infinity and which preserves the real and imaginary
lines close to infinity.

For each $t>0$, define $\eta_t$ to be the open segment that 
unites $-it\in i(-\infty,0)$ and $\frac{n}{2n-1}+t\in(\frac{n}{2n-1},+
\infty).$ It is clear that $\bigcup_{t>0}\eta_t=\{z\in\mathbb C^-\colon
\Re z>0\}.$ For each fixed $t>0$ we shall prove that 
there exists a unique simple path $a_t$ uniting
$G^{-1}_{\gamma_n}(-it)$ to a point in $(2,+\infty)$ so that
$G_{\gamma_n}(a_t)=\eta_t$.

The above remarks regarding the injectivity of $G_{\gamma_n}$ on a neighbourhood of the
imaginary line guarantee that $G_{\gamma_n}^{-1}$ is well-defined
on $i(-\infty,0)$, and so $G^{-1}_{\gamma_n}
(-it)$ is the unique choice for starting $a_t$. This choice imposes
that $a_t$ moves into the right half of the complex plane 
(recall that $G_{\gamma_n}'$ is positive on the imaginary line). 
Let us now extend $a_t$ under the condition that $G_{\gamma_n}(a_t)
\subseteq\eta_t$. We claim that we can extend $a_t$ {\em uniquely} all the way to having  $G_{\gamma_n}(a_t)=\eta_t$. Moreover, 
$a_t$ will eventually enter the lower half-plane and approach 
$(2,+\infty)$ from the lower right quadrant.
Indeed, assume towards contradiction that this
is not the case. There are two possible obstacles to this extension: first obstacle is a 
critical point for $G_{\gamma_n}$ along $a_t$, and the second is the possibility that 
the extension of $a_t$ leaves $\{z\in\mathbb C\colon\Re z>0\}\setminus
[2,+\infty)$.
Let us discuss the first obstacle first: if there is a point $c_0$ to which $a_t$ has been extended
by continuity so that $G_{\gamma_n}'(c_0)=0$, then, as long as $a_t$ 
has not left $\{z\in\mathbb C\colon\Re z>0\}\setminus
[2,+\infty)$, we have a direct contradiction to
Lemma \ref{deriv}, since $G_{\gamma_n}(c_0)\in\mathbb C^-$ by construction.
Thus, we need next to discard the case when $a_t$ leaves 
$\{z\in\mathbb C\colon\Re z>0\}\setminus[2,+\infty)$. It is obvious 
that it cannot leave it through $i\mathbb R$. Since both 
$G_{\gamma_n}|_{\mathbb C^+}$ and $G_{\gamma_n}|_{\mathbb C^-}$
map $\mathbb R\setminus[-2,2]$ to $\mathbb R$, it is equally obvious that $a_t$ cannot leave through $[2,+\infty)$, either from above 
(i.e. $\mathbb C^+$) or below (i.e. $\mathbb C^-$). Since 
$\lim_{z\to\infty}G_{\gamma_n}(z)=\infty$ when the limit is taken with 
values of $z$ in the lower half-plane, and $\lim_{z\to\infty}
G_{\gamma_n}(z)=0$ when the limit is taken with values of $z$ in the 
upper half-plane, $a_t$ cannot leave $\{z\in\mathbb C\colon\Re z>0\}
\setminus[2,+\infty)$ through infinity either.  
So $a_t$ cannot leave this set at all.
We conclude that $a_t$ can indeed be extended so that $G_{\gamma_n}
\colon a_t\mapsto\eta_t$ bijectively.

Recalling that there is a simply connected neighbourhood $V$ of 
$\mathbb C^+\cup(-2,2)\cup i\mathbb R$ on which $G_{\gamma_n}$
is injective, we find the simply connected open set $G_{\gamma_n}(V)
\subset\mathbb C^-$, which contains $i(-\infty,0)$, on which we can
uniquely define an analytic map $G_{\gamma_n}^{-1}$ which satisfies
$G_{\gamma_n}\circ G_{\gamma_n}^{-1}=\text{Id}_{G_{\gamma_n}(V)}$ and
$G_{\gamma_n}^{-1}\circ G_{\gamma_n}=\text{Id}_V$. Symmetry of
$\gamma_n$ implies that, first, we may assume $V$ to be symmetric
with respect to $i\mathbb R$ and, second, that $G_{\gamma_n}^{-1}$
must also satisfy $G_{\gamma_n}^{-1}(u+iv)=G_{\gamma_n}^{-1}(-u+iv)$.
From Lemma \ref{lem3} it follows that $z=Q_n(G_{\gamma_n}^{-1}(z)^2)
G_{\gamma_1}(G_{\gamma_n}^{-1}(z))+G_{\gamma_n}^{-1}(z)P_n(
G_{\gamma_n}^{-1}(z)^2)$; since $P_n,Q_n$ are polynomials and 
$G_{\gamma_1}$ is defined on $\mathbb C\setminus\left\{(-\infty,-2]\cup
[2,+\infty)\right\}$, this relation holds for all $z\in G_{\gamma_n}(V)
$. This equation will allow us to extend $G_{\gamma_n}^{-1}$ to all
of $\mathbb C^-$. Indeed, for any point $w\in\mathbb C^-$ there exists
a unique path $\eta_t$ so that $w\in\eta_t$. We claim that 
$G_{\gamma_n}^{-1}$ extends uniquely along $\eta_t$: since we have
shown the existence and uniqueness of a path $a_t\subset
\{z\in\mathbb C\colon\Re z>0\}\setminus[2,+\infty)$ so that
$G_{\gamma_n}$ maps $a_t$ bijectively onto $\eta_t$, 
it is clear that we can define $G_{\gamma_n}^{-1}$ on $\eta_t$
and with values in $a_t$. In addition, we recall
that, $\eta_t$ being in the lower half-plane and $a_t$ in 
$\{z\in\mathbb C\colon\Re z>0\}\setminus[2,+\infty)$, Lemma \ref{deriv}
guarantees that $G_{\gamma_n}(w)=
Q_n(w^2)G_{\gamma_1}(w)+wP_n(w^2)$ has a nonzero derivative at each
$w\in a_t$, and thus, by the analytic inverse function theorem, 
$G_{\gamma_n}^{-1}$ can be defined as an analytic map on a 
neighbourhood of $\eta_t$. Thus, since $\bigcup_{t>0}\eta_t=
\{z\in\mathbb C^-\colon\Re z>0\}$, we have extended our map
$G_{\gamma_n}^{-1}$ to $V\cup\{z\in\mathbb C^-\colon\Re z>0\}$.
We extend it to $\mathbb C^-$ by the formula 
$G_{\gamma_n}^{-1}(u+iv)=G_{\gamma_n}^{-1}(-u+iv)$. This completes
our proof, with $D_n=\bigcup_{t>0}a_t\cup i\mathbb R \cup \bigcup_{t>0}(-\overline{a_t})$.

\end{proof}

\begin{remark}[free divisibility indicator]
In terms of the free divisibility indicator $\phi(\mu)$ defined in \cite{BeNi} the last theorem says that $\phi(\gamma_n)\geq1$, for $n\in\mathbb N$. Using results in \cite{APA} it is easily shown that $\phi(\gamma_n)\leq\frac{2n+1}{n+2}.$
\end{remark}

The free infinite divisibility of the Gaussian distribution then follows.
\begin{corollary}
 The classical Gaussian distribution $d\gamma_\infty(t):={(2\pi)}^{-1}e^{-t^2/2}dt$ is freely infnitely divisible.
\end{corollary}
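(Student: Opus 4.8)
The plan is to combine Theorem \ref{Thm6} with Poincar\'{e}'s theorem and the stability of free infinite divisibility under weak limits. First I would record that free infinite divisibility is preserved by dilations: if $D_b$ denotes the pushforward of a measure under $t\mapsto bt$ for $b>0$, then $\phi_{D_b\mu}(z)=b\,\phi_\mu(z/b)$, and since $z\mapsto z/b$ preserves $\mathbb C^+$ while multiplication by $b>0$ preserves $\mathbb C^-$, Theorem \ref{bvid} shows that $D_b\mu$ is f. i. d. whenever $\mu$ is. Because $\gamma_n$ concentrates at the origin as $n\to\infty$ (its variance is of order $1/n$), Poincar\'{e}'s theorem furnishes scaling constants $b_n>0$ such that $\tilde\gamma_n:=D_{b_n}\gamma_n$ has unit variance and converges weakly to the standard Gaussian $\gamma_\infty$. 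By Theorem \ref{Thm6} and the previous observation, each $\tilde\gamma_n$ is freely infinitely divisible.

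It then remains to show that the weak limit $\gamma_\infty$ inherits free infinite divisibility. For this I would invoke the correspondence of Bercovici--Voiculescu between weak convergence of probability measures and locally uniform convergence of their Voiculescu transforms on a common truncated cone $\Gamma_{\alpha,\beta}\subset\mathbb C^+$: from $\tilde\gamma_n\to\gamma_\infty$ weakly one obtains that $\phi_{\tilde\gamma_n}$ and $\phi_{\gamma_\infty}$ are all defined on some such cone and that $\phi_{\tilde\gamma_n}\to\phi_{\gamma_\infty}$ uniformly on its compact subsets. Now each $\phi_{\tilde\gamma_n}$ is, by the previous paragraph, an analytic map of the \emph{entire} upper half-plane into $\mathbb C^-$; such maps omit more than two values and hence form a normal family on $\mathbb C^+$ (Montel). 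Consequently every subsequence of $(\phi_{\tilde\gamma_n})$ has a locally uniformly convergent sub-subsequence whose limit is analytic $\psi\colon\mathbb C^+\to\overline{\mathbb C^-}$ (the constant limit $\infty$ is excluded, since on $\Gamma_{\alpha,\beta}$ the limit equals the finite function $\phi_{\gamma_\infty}$). By the identity theorem all such limits agree on $\Gamma_{\alpha,\beta}$ with $\phi_{\gamma_\infty}$, hence coincide everywhere; therefore the full sequence $\phi_{\tilde\gamma_n}$ converges locally uniformly on $\mathbb C^+$ to a single analytic extension $\tilde\phi\colon\mathbb C^+\to\overline{\mathbb C^-}$ of $\phi_{\gamma_\infty}$.

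Finally I would upgrade the target $\overline{\mathbb C^-}$ to $\mathbb C^-$ by the maximum principle. The function $\Im\tilde\phi$ is harmonic and nonpositive on $\mathbb C^+$; if it vanished at an interior point it would be identically zero, forcing $\tilde\phi$ to be a real constant and hence $\gamma_\infty$ to be a point mass, which is absurd. Thus $\tilde\phi\colon\mathbb C^+\to\mathbb C^-$, and Theorem \ref{bvid} gives that $\gamma_\infty$ is freely infinitely divisible. The main obstacle is the middle step: transferring the convergence of the Voiculescu transforms from the truncated cone, where the Bercovici--Voiculescu machinery directly provides it, to all of $\mathbb C^+$. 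This transfer is precisely the statement that the class of $\boxplus$-infinitely divisible laws is closed under weak convergence, and the normal-family argument above is what makes it rigorous; everything else reduces to the elementary scaling behaviour of $\phi$ and the already-established Theorem \ref{Thm6}.
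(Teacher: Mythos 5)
Your proposal is correct and takes essentially the same route as the paper: each (suitably dilated) $\gamma_n$ is f.\,i.\,d.\ by Theorem \ref{Thm6}, the dilated measures converge weakly to the Gaussian by Poincar\'e's theorem, and the class of $\boxplus$-infinitely divisible laws is closed under weak convergence. The only difference is one of detail: the paper simply cites that closure property as known (it goes back to \cite{BVIUMJ}), whereas you supply a full normal-families proof of it and also make explicit the dilation-invariance of free infinite divisibility hidden in the paper's phrase ``properly normalized.''
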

\begin{proof}
The class of f. i. d. 's is closed with respect to the weak converngence. Since, by
Poincar\'{e}'s theorem \cite{Poin}, the distributions $\gamma_n$, properly normalized, converge weakly
to a Gaussian distribution, we get the result by Theorem \ref{Thm6}.
\end{proof}

Finally, from our main result we see that some beta distributions are freely infinitely divisible. Recall that a beta distribution is given by its density function
\begin{equation*}
d_{Be(\alpha,\beta)}(t)=\frac{1}{B(\alpha,\beta)}(t)^{\alpha-1}(1-t)^{\beta-1}{\bf 1}_{[0,1]}\,dt.
\end{equation*}

\begin{corollary} For $n\in\mathbb N$ the following families are freely infinitely divisible:

 (1) The beta distributions $Beta(n+1/2,n+1/2)$.

 (2) The beta distributions $Beta(1/2,n+1/2)$ and $Beta(n+1/2,1/2)$ .
\end{corollary}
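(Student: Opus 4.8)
The plan is to realize each of the listed beta distributions as an explicit push-forward of some $\gamma_n$, and then to invoke the invariance of free infinite divisibility under the maps involved, together with Theorem \ref{Thm6}.

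For part (1) I would use a single affine change of variables. Setting $s=(t+2)/4$, the interval $[-2,2]$ is carried bijectively onto $[0,1]$, and since $4-t^2=16\,s(1-s)$, the density $(4-t^2)^{n-1/2}$ is transformed into a density proportional to $(s(1-s))^{n-1/2}=s^{(n+1/2)-1}(1-s)^{(n+1/2)-1}$. Hence $Beta(n+1/2,n+1/2)$ is exactly the image of $\gamma_n$ under the affine map $t\mapsto(t+2)/4$. Because free infinite divisibility is preserved by affine transformations (dilations and translations commute with the existence of free convolution roots), part (1) follows at once from Theorem \ref{Thm6}.

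For part (2) I would first identify the distribution of a square. If $X\sim\gamma_n$, then $X$ is symmetric and a direct computation of the law of $X^2$ gives that $S:=X^2/4$ has density proportional to $s^{-1/2}(1-s)^{n-1/2}=s^{(1/2)-1}(1-s)^{(n+1/2)-1}$ on $[0,1]$; that is, $S\sim Beta(1/2,n+1/2)$. Since dilation preserves free infinite divisibility, it then suffices to prove that the push-forward $\nu_n$ of the symmetric measure $\gamma_n$ under $t\mapsto t^2$ is freely infinitely divisible. Finally, applying the affine reflection $t\mapsto 1-t$ (which carries $Beta(\alpha,\beta)$ to $Beta(\beta,\alpha)$ and preserves free infinite divisibility) turns $Beta(1/2,n+1/2)$ into $Beta(n+1/2,1/2)$, yielding the second family.

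The main obstacle is thus the squaring step: that the square of the symmetric, freely infinitely divisible law $\gamma_n$ is again freely infinitely divisible. The starting point is the elementary identity, valid for any symmetric $\mu$ with square-law $\nu$,
\[
zG_\nu(z^2)=G_\mu(z),
\]
obtained from the partial fraction decomposition $\frac{1}{z^2-t^2}=\frac{1}{2z}\left(\frac{1}{z-t}+\frac{1}{z+t}\right)$ together with the symmetry $\int(z+t)^{-1}\,d\mu(t)=G_\mu(z)$. I would combine this with the conformal inverse of $G_{\gamma_n}$ furnished by Theorem \ref{Thm6}: writing $u=z^2$ and using that $z\mapsto z^2$ maps $\mathbb C^+$ conformally onto $\mathbb C\setminus[0,\infty)$, one translates the problem of extending $G_{\nu_n}^{-1}$ over all of $\mathbb C^-$ into that of solving $G_{\gamma_n}(z)=wz$ analytically in $w\in\mathbb C^-$, which the conformal bijection $G_{\gamma_n}\colon D_n\to\mathbb C^-$ should make possible. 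Once such a conformal right inverse of $G_{\nu_n}$ on $\mathbb C^-$ is produced, Theorem \ref{bvid} gives that $\nu_n$ is freely infinitely divisible. The delicate point I expect to fight with is the behaviour at the left endpoint of the support: $G_{\nu_n}$ has a square-root singularity at $0$ (reflecting the $s^{-1/2}$ factor), which corresponds to the pole of $G_{\gamma_n}(z)/z$ at $z=0$, and one must check that the inversion can be carried consistently across the portion of $\mathbb C^-$ abutting the real axis in spite of this singularity. Alternatively, one may quote the known fact that squaring sends symmetric freely infinitely divisible laws to freely infinitely divisible ones, which shortcuts this verification.
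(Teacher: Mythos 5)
Your reductions are exactly the paper's: for (1), the affine image of $\gamma_n$ under $t\mapsto(t+2)/4$ is $Beta(n+1/2,n+1/2)$ (the paper writes the map as $\frac14(1-X)$, evidently a typo for this), and for (2), the law of $\frac14X^2$ is $Beta(1/2,n+1/2)$, with the reflection $t\mapsto 1-t$ giving $Beta(n+1/2,1/2)$; both steps then use invariance of free infinite divisibility under affine maps. For the crucial squaring step, the paper does precisely what you list as your \emph{alternative}: it quotes the result of \cite{AHS} that the square of a symmetric freely infinitely divisible law is again freely infinitely divisible. Read with that quotation, your proof is complete and coincides with the paper's.

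Your primary route for the squaring step, however, has a genuine gap as sketched. The identity $zG_{\nu_n}(z^2)=G_{\gamma_n}(z)$ is correct, but inverting $G_{\nu_n}$ on $\mathbb C^-$ amounts to inverting the function $z\mapsto G_{\gamma_n}(z)/z$, i.e.\ solving $G_{\gamma_n}(z)=wz$ for each $w\in\mathbb C^-$, and this is \emph{not} an inversion of $G_{\gamma_n}$ itself. The conformal bijection $G_{\gamma_n}\colon D_n\to\mathbb C^-$ of Theorem \ref{Thm6} says nothing about injectivity or surjectivity of the quotient $G_{\gamma_n}(z)/z$ on any domain; its critical points, where $zG_{\gamma_n}'(z)=G_{\gamma_n}(z)$, are not controlled by Lemma \ref{deriv}, which only rules out zeros of $G_{\gamma_n}'$. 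Carrying out such a construction (and then invoking Theorem \ref{bvid}) is essentially the content of the theorem in \cite{AHS}, which is proved there by a nontrivial separate argument and in fact yields the stronger conclusion of free regularity; it is not a routine consequence of Theorem \ref{Thm6}. So the citation should be the main line of the proof, as in the paper, rather than a fallback.
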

\begin{proof}
 1) If $X\sim\gamma_n$ then $1/4(1-X)$ has a distribution $Beta(n+1/2,n+1/2)$. Since free infinite divisibility is preserved under affine transformations we see that $Beta(n+1/2,n+1/2)$ is f. i. d.

 2) It was proved recently in \cite{AHS} that the square of any symmetric f. i. d. is also f. i. d. Now if $X\sim\gamma_n$ then $Y=1/4X^2$ has a distribution $Beta(1/2,n+1/2)$ which then is f. i. d. Finally, for the same reason as in (1), the distribution of $1-Y$ shall be f. i. d. which is a $Beta(n+1/2,1/2)$.
\end{proof}

\end{document}